\newcommand{\C}{\mathbb{C}}
\newcommand{\QQ}{\mathbb{Q}}
\newcommand{\PP}{\mathbb{P}}
\newcommand{\OO}{\mathcal O}
\newcommand{\XX}{\mathcal X}
\newcommand{\YY}{\mathcal Y}
\newcommand{\GG}{\mathcal G}
\newcommand{\LLL}{\mathcal L}
\newcommand{\VV}{\mathcal V}
\newcommand{\WW}{\mathcal W}
\newcommand{\codim}{\hbox{codim}}
\newcommand{\wt}{\widetilde}
\newcommand{\ima}{\hbox{Im}}
\newtheorem{theorem}{Theorem}[section]
\newtheorem{claim}[theorem]{Claim}
\newtheorem{lemma}[theorem]{Lemma}
\newtheorem{nonumbering}{Theorem}
\newtheorem{convention}{Conventions}
\theoremstyle{definition}
\newtheorem{remark}[theorem]{Remark}
\newtheorem{nonumberingt}{Acknowledgements}
\begin{document}
\author[Robert Laterveer]
{Robert Laterveer}

\address{Institut de Recherche Math\'ematique Avanc\'ee,
CNRS -- Universit\'e 
de Strasbourg,\
7 Rue Ren\'e Des\-car\-tes, 67084 Strasbourg CEDEX,
FRANCE.}
\email{robert.laterveer@math.unistra.fr}

\title{On the Chow ring of certain hypersurfaces in a Grassmannian}

\begin{abstract} This note is about Pl\"ucker hyperplane sections $X$ of the Grassmannian $\operatorname{Gr}(3,V_{10})$. Inspired by the analogy with cubic fourfolds, we prove that the only non-trivial Chow group of $X$ is generated by Grassmannians of type $\operatorname{Gr}(3,W_{6})$ contained in $X$. We also prove that a certain subring of the Chow ring of $X$ (containing all intersections of positive-codimensional subvarieties) injects into cohomology.
\end{abstract}

\keywords{Algebraic cycles, Chow ring, motives, hyperk\"ahler varieties, Beauville ``splitting property''}

\subjclass{Primary 14C15, 14C25, 14C30.}

\maketitle

\section{Introduction}

 Let $\LLL$ be the Pl\"ucker polarization on the complex Grassmannian $\operatorname{Gr}(3,V_{10})$, and let
   \[ X\in \vert \LLL\vert \]
   be a smooth hypersurface in the linear system of $\LLL$. The Hodge diamond of the $20$-dimensional variety $X$ is
     \[ \begin{array}[c]{ccccccccc}
	&&&&1&&&&\\
	&&&  &2  &  &&&\\
	&&&  &3  &  &&&\\	
	&&&&\ast&&&&\\
	&&&&\vdots&&&&\\
	&&&&\ast&&&&\\
		0&\dots\ \  \dots&	0&1 & 30  &1&0&\dots \ \ \dots&0\\
		&&&&\ast&&&&\\
	&&&&\vdots&&&&\\
		&&&  &\ast&  &&&\\
	&&&  &3  &  &&&\\	
	&&&  &2  &  &&&\\
           &&&&1&&&&\\
	\end{array}\]	
	(Here $\ast$ indicates some unspecified number, and all empty entries are $0$. The Hodge numbers of the vanishing cohomology can be found in \cite[Theorem 1.1]{DV}; alternatively, they can be computed using \cite[Theorem 1.1]{FM}.) 
	
This looks much like the Hodge diamond of a cubic fourfold. To further this analogy,
	Debarre and Voisin \cite{DV} have constructed, for a general such hypersurface $X$, a hyperk\"ahler fourfold $Y$ that is associated (via an Abel--Jacobi isomorphism) to $X$.
	Just as in the famous Beauville--Donagi construction starting from a cubic fourfold \cite{BV}, the hyperk\"ahler fourfolds $Y$ form a $20$-dimensional family, deformation equivalent to the Hilbert square of a K3 surface. The analogy
  \[  \hbox{Pl\"ucker\ hypersurfaces\ in\ $\operatorname{Gr}(3,V_{10})$}  	\ \leftrightsquigarrow    \ \hbox{cubic\ fourfolds} \]
  also exists on the level of derived categories \cite[Section 4.4]{Kuz}.
  	
	In this note we will be interested in the Chow ring $A^\ast(X)_{\QQ}$ of the hypersurface $X$. Using her celebrated method of {\em spread\/} of algebraic cycles in families, Voisin 
	\cite[Theorem 2.4]{V1} (cf. also the proof of theorem \ref{main} below) has already proven a form of the Bloch conjecture for $X$: one has vanishing
	\[  A^i_{hom}(X)_{\QQ}=0\ \ \ \forall\ i\not=11\ \]
	 (where $A^i_{hom}(X)_{\QQ}$ is defined as the kernel of the cycle class map to singular cohomology).
	 This is the analogue of the well-known fact that the only non-trivial Chow group of a cubic fourfold is the Chow group of $1$-cycles.
	 
	 We complete Voisin's result, by describing the only non-trivial Chow group of $X$:
	 
	 \begin{nonumbering}[=theorem \ref{main}] Let $\LLL$ be the Pl\"ucker polarization on $\operatorname{Gr}(3,V_{10})$. Let $X\in\vert \LLL\vert$ be a smooth hypersurface for which the associated hyperk\"ahler fourfold $Y$ is smooth. Then $A^{11}_{hom}(X)_{\QQ}$ is generated by Grassmannians $\operatorname{Gr}(3,W_6)$ contained in $X$.
	 \end{nonumbering}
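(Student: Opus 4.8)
The plan is to reduce the theorem to a surjectivity statement for the natural incidence correspondence between $X$ and $Y$, and then to \emph{reprove} Voisin's vanishing \cite[Theorem 2.4]{V1} by her method of spread of cycles in families, this time keeping the correspondence in sight. Concretely, write $X=X_\sigma$ for the defining trivector $\sigma\in\wedge^3V_{10}^\vee$ and $Y=Y_\sigma=\{W_6\in\operatorname{Gr}(6,V_{10}):\sigma|_{\wedge^3W_6}=0\}$ for the Debarre--Voisin fourfold \cite{DV}, which is smooth by hypothesis. Because $\sigma|_{\wedge^3W_6}=0$ if and only if $\operatorname{Gr}(3,W_6)\subseteq X$, the variety $Y$ parametrizes precisely the Grassmannians $\operatorname{Gr}(3,W_6)$ contained in $X$, and the incidence variety $Z:=\{(W_6,[U_3])\in Y\times X:U_3\subseteq W_6\}$ is a $\operatorname{Gr}(3,6)$-bundle over $Y$; thus $\dim Z=4+9=13$ and $[Z]\in A^{11}(Y\times X)_{\QQ}$. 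Since the fibre of $Z\to Y$ over $W_6$ is $\operatorname{Gr}(3,W_6)$, the induced map $Z_*\colon A^4(Y)_{\QQ}\to A^{11}(X)_{\QQ}$ sends $[W_6]$ to $[\operatorname{Gr}(3,W_6)]$; as $A^4(Y)_{\QQ}=A_0(Y)_{\QQ}$ is generated by points, the image of $Z_*$ is exactly the $\QQ$-span of the classes $[\operatorname{Gr}(3,W_6)]$, $W_6\in Y$. Because $Z_*$ preserves homological triviality while the class of a single $\operatorname{Gr}(3,W_6)$ is non-zero in cohomology, the theorem becomes equivalent to the assertion that $Z_*$ maps $A^4_{hom}(Y)_{\QQ}$ onto $A^{11}_{hom}(X)_{\QQ}$.

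Two inputs go into proving this. First, the Debarre--Voisin ``Abel--Jacobi'' isomorphism \cite{DV}, which is induced by the correspondence $Z$: it identifies the vanishing cohomology $H^{20}_{van}(X,\QQ)$ --- the only non-Tate part of $H^*(X,\QQ)$ --- with $H^2_{prim}(Y,\QQ)(-9)$, and is zero on the complementary (Lefschetz) cohomology; equivalently, the transcendental part of $H^*(X)$ is cut out by $Z$ from $Y$. Second, Voisin's theorem \cite[Theorem 2.4]{V1}, that $A^i_{hom}(X)_{\QQ}=0$ for $i\neq 11$: from it one extracts the relevant Chow--K\"unneth projectors of $X$, identifying $A^{11}_{hom}(X)_{\QQ}$ with the Chow group of the transcendental motive $\ttt(X)$, a direct summand of $\hh^{20}(X)$ with cohomology $H^{20}_{van}(X,\QQ)$ --- exactly as for the transcendental motive of a K3 surface or of a cubic fourfold. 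Note that one cannot simply upgrade the Debarre--Voisin cohomological isomorphism to an isomorphism of Chow motives $\ttt(X)\cong\ttt_2(Y)(-9)$ by abstract nonsense, since finite-dimensionality of (the transcendental part of) the hyperk\"ahler fourfold $Y$ is not known; this is the reason for going through Voisin's spread technique.

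The main argument runs in families. Let $B\subseteq\PP(\wedge^3V_{10}^\vee)$ be the dense Zariski-open locus of trivectors $\sigma$ with $X_\sigma$ and $Y_\sigma$ both smooth, and form the smooth families $\XX\to B$, $\YY\to B$ together with the relative incidence $\mathcal Z\subseteq\YY\times_B\XX$. Given $z\in A^{11}_{hom}(X_b)_{\QQ}$ I would spread $z$ out to a cycle $\tilde z$ on $\XX$ (after a finite base change and after shrinking $B$) restricting to $z$ over $b$, and analyse its class in the Leray spectral sequence of $\XX\to B$: by the first input, the only Leray graded piece available in the relevant bidegree is the image under $\mathcal Z_*$ of a class coming from $\YY$. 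Combined with Voisin's spreading-out lemma for cycles in families, this should force $\tilde z$ to agree, modulo a cycle vanishing on the very general fibre and a cycle supported over a divisor of $B$, with $\mathcal Z_*(\tilde w)$ for $\tilde w$ a relative $0$-cycle on $\YY/B$. Restricting back to $X_b$ and killing the error terms by a Noether--Lefschetz argument yields $z\in Z_*(A^4(Y_b)_{\QQ})$ for $b$ very general (and $z=0$ for $i\neq 11$, recovering \cite[Theorem 2.4]{V1}); finally, spreading out and specialising the resulting decomposition of the diagonal of $X_b$ --- using that $Z_\sigma$ and its Grassmann-bundle structure exist whenever $Y_\sigma$ is smooth --- extends the conclusion to every $X$ in the statement.

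The main obstacle, I expect, is precisely this: Voisin's spread argument in \cite{V1} is run so as to produce only the \emph{vanishing} $A^i_{hom}(X)=0$ for $i\neq 11$, whereas here one must follow the correspondence $\mathcal Z$ and the Leray filtration closely enough to also pin down $A^{11}_{hom}(X)$ as $Z_*(A^4(Y))$, and then control the two kinds of error cycles --- those supported over divisors of $B$, and those vanishing only on the very general fibre --- well enough to reach the conclusion for \emph{every} admissible $X$ and not merely the very general one. A more routine point, needed only for the reduction in the first paragraph, is to produce the relevant Chow--K\"unneth projectors of $X$ out of Voisin's vanishing.
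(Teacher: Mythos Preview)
Your overall strategy is the paper's: reduce to surjectivity of $Z_*\colon A^4_{hom}(Y)\to A^{11}_{hom}(X)$, then combine the Debarre--Voisin Abel--Jacobi isomorphism with Voisin's spread technique. But the implementation you sketch --- spread an individual cycle $z\in A^{11}_{hom}(X_b)$ to $\tilde z$ on $\XX$ and analyse its class via the Leray spectral sequence --- has a genuine gap. Leray only controls the \emph{cohomology} class of $\tilde z$; even if you arrange $[\tilde z]=[\mathcal Z_*\tilde w]$ in $H^{22}(\XX)$ for some relative $0$-cycle $\tilde w$ on $\YY$, the difference $\tilde z-\mathcal Z_*\tilde w$ lies in $A^{11}_{hom}(\XX)$ and its restriction to $X_b$ could be any element of $A^{11}_{hom}(X_b)$, including $z$ itself. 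Voisin's lemma \cite[Proposition~1.6]{V1} does not convert cohomological triviality of a spread cycle on $\XX$ into fibrewise rational triviality; it is a statement about relative correspondences on $\XX\times_B\XX$ and the projective-bundle structure of the incidence variety over $\widetilde{M\times M}$. The obstacle you flag at the end is real, and your outline does not overcome it.

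The paper's fix is to work with correspondences from the start and to make the Abel--Jacobi inverse \emph{explicit}. It proves (Lemma~\ref{aj}) that for very general $\sigma$ the map $(p_\sigma)_*(q_\sigma)^*\circ(\tfrac{1}{\mu}\,g^2\cdot{-})$ is the two-sided inverse of $(q_\sigma)_*(p_\sigma)^*$ on $H^{20}(X_\sigma)_{van}$, where $g$ is the Pl\"ucker class on $Y_\sigma$ and $\mu\in\QQ^\times$ is a constant independent of $\sigma$ (coming from the uniqueness of the polarisation on the simple Hodge structure $H^2(Y_\sigma)_{van}$). One then writes down the single relative correspondence
\[
\Gamma:=\mu\,\Delta_\XX-\Gamma_p\circ{}^t\Gamma_q\circ\Gamma_{g^2}\circ\Gamma_q\circ{}^t\Gamma_p\ \in\ A^{20}(\XX\times_{B'}\XX),
\]
which by construction kills $H^{11,9}(X_\sigma)$ for very general $\sigma$. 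Voisin's spread result (Theorem~\ref{claire}) then gives $(\Gamma|_{X_\sigma\times X_\sigma})_*A^{11}_{hom}(X_\sigma)=0$ for \emph{every} $\sigma$, and unwinding $\Gamma$ yields the surjectivity of $(p_\sigma)_*(q_\sigma)^*$ on $A^{11}_{hom}$ directly --- no Noether--Lefschetz argument for individual cycles, no separate specialisation step, and no Chow--K\"unneth projectors for $X$ are needed.
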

	 
This is the analogue of the well-known fact that for a cubic fourfold $V\subset\PP^5(\C)$, the Chow group $A^3(V)$ is generated by lines \cite{Par}.	 Theorem \ref{main} is readily proven using the spread method of \cite{V0}, \cite{V1}, \cite{Vo}; as such, theorem \ref{main} could naturally have been included in \cite{V1}.

  The second result of this note concerns the ring structure of the Chow ring of $X$, given by the intersection product:
  
  \begin{nonumbering}[=theorem \ref{main2}] Let $\LLL$ be the Pl\"ucker polarization on $\operatorname{Gr}(3,V_{10})$, and let $X\in\vert \LLL\vert$ be a smooth hypersurface.  
  Let $R^{11}(X)\subset A^{11}(X)_{\QQ}$ be the subgroup containing intersections of two cycles of positive codimension, the Chern class $c_{11}(T_X)$ and the image of the restriction map $A^{11}(\operatorname{Gr}(3,V_{10}))_{\QQ}\to A^{11}(X)_{\QQ}$. The cycle class map induces an injection
    \[ R^{11}(X)\ \hookrightarrow\ H^{22}(X,\QQ)\ .\]
      \end{nonumbering}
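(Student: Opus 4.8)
The plan is to prove the stronger assertion that $R^{11}(X)$ coincides with the image of the restriction map $\iota^\ast\colon A^{11}(\operatorname{Gr}(3,V_{10}))_\QQ\to A^{11}(X)_\QQ$ (here $\iota\colon X\hookrightarrow\operatorname{Gr}(3,V_{10})$ is the inclusion of the smooth ample divisor), and then to observe that this image injects into cohomology.

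For the first step I would go through the generators of $R^{11}(X)$ one at a time. The normal bundle sequence $0\to T_X\to\iota^\ast T_{\operatorname{Gr}(3,V_{10})}\to\iota^\ast\LLL\to 0$ gives $c(T_X)=\iota^\ast\bigl(c(T_{\operatorname{Gr}(3,V_{10})})\cdot(1+c_1(\LLL))^{-1}\bigr)$, so $c_{11}(T_X)$ is a polynomial with rational coefficients in restricted classes, hence lies in $\iota^\ast A^{11}(\operatorname{Gr}(3,V_{10}))_\QQ$. Next, for $1\le i\le 9$ one has $A^i(X)_\QQ=\iota^\ast A^i(\operatorname{Gr}(3,V_{10}))_\QQ$: the vanishing $A^i_{hom}(X)_\QQ=0$ recalled above makes the cycle class map inject $A^i(X)_\QQ$ into $H^{2i}(X,\QQ)$, the Lefschetz hyperplane theorem identifies $H^{2i}(X,\QQ)$ with $H^{2i}(\operatorname{Gr}(3,V_{10}),\QQ)$ (as $2i\le 18<20=\dim X$), and $H^{2i}(\operatorname{Gr}(3,V_{10}),\QQ)$ is spanned by Schubert classes, which are algebraic. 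Finally, consider a product $z_1\cdot z_2$ of cycles of positive codimensions $a\le b$ with $a+b=11$; then $a\le 5$, so $z_1=\iota^\ast\alpha_1$ for some $\alpha_1$. If $b\le 9$ as well then $z_2=\iota^\ast\alpha_2$ and $z_1 z_2=\iota^\ast(\alpha_1\alpha_2)$ is as desired; if $b=10$, then $a=1$ and $z_1$ is a multiple of $H:=\iota^\ast c_1(\LLL)$, whence $z_1 z_2$ is a multiple of $H\cdot z_2$, and the self-intersection formula $H\cdot z_2=c_1(N_{X/\operatorname{Gr}(3,V_{10})})\cdot z_2=\iota^\ast\iota_\ast z_2$ exhibits this class in $\iota^\ast A^{11}(\operatorname{Gr}(3,V_{10}))_\QQ$. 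Thus every generator of $R^{11}(X)$ lies in $\iota^\ast A^{11}(\operatorname{Gr}(3,V_{10}))_\QQ$, and since the latter is itself one of the listed generators, $R^{11}(X)=\iota^\ast A^{11}(\operatorname{Gr}(3,V_{10}))_\QQ$.

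For the second step it suffices to show that $\iota^\ast\colon H^{22}(\operatorname{Gr}(3,V_{10}),\QQ)\to H^{22}(X,\QQ)$ is injective. Indeed, the cycle class map $A^\ast(\operatorname{Gr}(3,V_{10}))_\QQ\to H^{2\ast}(\operatorname{Gr}(3,V_{10}),\QQ)$ is an isomorphism (Schubert cell decomposition), so if $\iota^\ast\alpha$ is homologically trivial then $\iota^\ast[\alpha]=0$, hence $[\alpha]=0$, hence $\alpha=0$ in $A^{11}(\operatorname{Gr}(3,V_{10}))_\QQ$, hence $\iota^\ast\alpha=0$. Now $\iota^\ast$ on $H^{22}$ is, by Poincar\'e duality on $X$ and on $\operatorname{Gr}(3,V_{10})$, transpose to $\iota_\ast\colon H^{18}(X,\QQ)\to H^{20}(\operatorname{Gr}(3,V_{10}),\QQ)$, so it suffices that this map be surjective. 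By the Lefschetz hyperplane theorem $\iota^\ast\colon H^{18}(\operatorname{Gr}(3,V_{10}),\QQ)\to H^{18}(X,\QQ)$ is an isomorphism, so the image of $\iota_\ast$ equals that of $\iota_\ast\iota^\ast=c_1(\LLL)\cup(-)\colon H^{18}(\operatorname{Gr}(3,V_{10}),\QQ)\to H^{20}(\operatorname{Gr}(3,V_{10}),\QQ)$; this operator is injective by hard Lefschetz (as $\LLL$ is ample and $18<21=\dim\operatorname{Gr}(3,V_{10})$), hence an isomorphism provided $\dim_\QQ H^{18}=\dim_\QQ H^{20}$. This last equality is a Schubert-calculus computation: $\dim_\QQ H^{2k}(\operatorname{Gr}(3,V_{10}),\QQ)$ equals the number of partitions of $k$ that fit in a $3\times 7$ box, and for $k=9$ and $k=10$ this number is $10$ in both cases.

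The main obstacle is precisely this numerical coincidence $\dim H^{18}(\operatorname{Gr}(3,V_{10}))=\dim H^{20}(\operatorname{Gr}(3,V_{10}))$, equivalently the injectivity of $\iota^\ast$ on $H^{22}$: were it to fail, the homologically trivial part of $\iota^\ast A^{11}(\operatorname{Gr}(3,V_{10}))_\QQ$ would have to be shown to vanish by a genuinely geometric argument (Voisin's spread of cycles, as in the proof of Theorem~\ref{main}), rather than by Poincar\'e duality and a Betti-number count. A secondary point to watch is that the identity $R^{11}(X)=\iota^\ast A^{11}(\operatorname{Gr}(3,V_{10}))_\QQ$ really does use the vanishing $A^i_{hom}(X)_\QQ=0$ throughout the range $2\le i\le 9$, so it depends on Voisin's Bloch-type theorem and is not purely formal.
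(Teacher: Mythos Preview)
Your argument is correct and runs along the same lines as the paper's, though you streamline it. The paper first records a ``generalized Franchetta'' statement (Theorem~\ref{gfc}) for the universal family $\XX\to B$: it shows via a projective-bundle argument that any universally defined class restricts from $\operatorname{Gr}(3,V_{10})$ (Claim~\ref{c1}), and that restricted classes inject into cohomology (Claim~\ref{c2}); then it checks that each generator of $R^{11}(X)$ is universally defined. You bypass the universal family and go straight to $R^{11}(X)=\iota^\ast A^{11}(\operatorname{Gr}(3,V_{10}))_\QQ$, using precisely the same ingredients (normal-bundle sequence for $c_{11}(T_X)$; weak Lefschetz plus $A^i_{hom}=0$ for the low-codimension factors; the identity $H\cdot z_2=\iota^\ast\iota_\ast z_2$ for the case $a=1$, $b=10$). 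For the injectivity step, the paper writes down an explicit basis of $A^{10}(\operatorname{Gr}(3,V_{10}))$ in the Chern-class generators and observes that each basis element is divisible by $c_1$; your route via hard Lefschetz together with the partition count $\dim H^{18}=\dim H^{20}=10$ is cleaner and reaches the same conclusion. What you give up by skipping the universal-family formulation is only Theorem~\ref{gfc} as a standalone statement, which the paper isolates because it allows one to enlarge $R^{11}(X)$ by any other universally defined class (cf.\ the remark after the proof).
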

      
   This is reminiscent of the famous result about the Chow ring of a K3 surface \cite{BV}. It is also an analogue of the fact that for a cubic fourfold $V$, the subgroup $A^2(V)_{\QQ}\cdot A^1(V)_{\QQ}\subset A^3(V)_{\QQ}$ is one-dimensional. Theorem \ref{main2} suggests that the hypersurfaces $X$ might have a multiplicative Chow--K\"unneth decomposition, in the sense of Shen--Vial \cite{SV}. This seems difficult to establish, however (cf. remark \ref{difficult}).

\vskip0.6cm

\begin{convention} In this note, the word {\sl variety\/} will refer to a reduced irreducible scheme of finite type over $\C$. For a smooth variety $X$, we will denote by $A^j(X)$ the Chow group of codimension $j$ cycles on $X$ 
with $\QQ$--coefficients.

The notations 
$A^j_{hom}(X)$, $A^i_{AJ}(X)$ will be used to indicate the subgroups of 
homologically trivial (resp. Abel--Jacobi trivial) cycles.

For a morphism between smooth varieties $f\colon X\to Y$, we will write $\Gamma_f\in A^\ast(X\times Y)$ for the graph of $f$, and ${}^t \Gamma_f\in A^\ast(Y\times X)$ for the transpose correspondence.


We will write $H^\ast(X)=H^\ast(X,\QQ)$ for singular cohomology with $\QQ$-coefficients.
\end{convention}

\section{Generators for $A^{11}$}
\label{sm1}

\begin{theorem}\label{main} Let $\LLL$ be the Pl\"ucker polarization on $\operatorname{Gr}(3,V_{10})$. Let $X\in\vert \LLL\vert$ be a smooth hypersurface for which there is an associated smooth hyperk\"ahler fourfold $Y$. Then $A^{11}_{hom}(X)$ is generated by the classes of Grassmannians $\operatorname{Gr}(3,W_6)\subset X$ (where $W_6\subset V_{10}$ is a 
six-dimensional vector space).
\end{theorem}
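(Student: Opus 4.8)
The plan is to carry out Voisin's ``spread of cycles'' argument (as in \cite{V0}, \cite{V1}, \cite{Vo}) for the universal family of Pl\"ucker hypersurfaces, in tandem with the Debarre--Voisin construction of the associated hyperk\"ahler fourfold. Set $\bar B=\vert\LLL\vert=\PP(\wedge^3V_{10}^\ast)$ and let $B\subset\bar B$ be the open locus of $[\sigma]$ for which both $X_\sigma$ and the associated fourfold $Y_\sigma$ are smooth; write $\pi\colon\XX\to B$ for the universal hypersurface, $\rho\colon\YY\to B$ for the universal fourfold (carrying the universal rank-$6$ subbundle $\mathcal W\subset V_{10}\otimes\OO_{\YY}$), and $\PPP\subset\XX\times_B\YY$ for the relative universal $\operatorname{Gr}(3,\mathcal W)$, so that the fibre of $\PPP$ over a point $y\in\YY$ lying over $b\in B$ is the Grassmannian $\operatorname{Gr}(3,W_6^{(y)})\subset X_b$. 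I will also use the family $\bar\XX\to\bar B$ over the full linear system and its second projection $q\colon\bar\XX\to\operatorname{Gr}(3,V_{10})$, which is a projective bundle (the members of $\vert\LLL\vert$ through a fixed point of $\operatorname{Gr}(3,V_{10})$ form a hyperplane in $\bar B$); hence $A^\ast(\bar\XX)$ is generated over $A^\ast(\operatorname{Gr}(3,V_{10}))$, via $q^\ast$, by the relative hyperplane class, which moreover restricts to $0$ on every fibre $X_b$ of $\bar\XX\to\bar B$ (so cycles that spread over the whole linear system are ``ambient'', i.e.\ come from $\operatorname{Gr}(3,V_{10})$).

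Given $z\in A^{11}_{hom}(X)$ with $X=X_b$ a very general member, I would first spread $z$ out in the usual way: each subvariety representing $z$ is a point of some component of the relative Hilbert scheme $\operatorname{Hilb}(\XX/B)$; there are only countably many components, and since $b$ is very general one may assume the relevant components dominate $B$. A generically finite base change $B'\to B$ then produces a cycle $Z\in A^{11}(\XX_{B'})$, $\XX_{B'}:=\XX\times_B B'$, with $Z\vert_{X_{b'}}=z$ for a point $b'\mapsto b$ and, by local constancy of the fibrewise cohomology class, $Z\vert_{X_{b''}}\in A^{11}_{hom}(X_{b''})$ for $b''\in B'$ very general. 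The heart of the argument is then to prove, using the spread method, that $Z$ lies in the subgroup of $A^{11}(\XX_{B'})$ generated by (a) cycles restricted from $\operatorname{Gr}(3,V_{10})\times B'$, (b) cycles supported over a proper closed subset of $B'$, and (c) cycles of the form $(\PPP_{B'})_\ast(W)$ with $W$ a cycle on $\YY_{B'}$ restricting to a zero-cycle on each fibre $Y_{b''}$. The inputs for this are: (i) Voisin's vanishing $A^i_{hom}(X_b)=0$ for $i\ne 11$ (\cite[Theorem 2.4]{V1}), which — being itself proven by a spread — holds in the relative form needed to annihilate the contribution of every Chow group of the fibres other than the middle one; (ii) the Debarre--Voisin isomorphism between $H^{20}_{van}(X_b)$ and $H^2(Y_b)_{prim}(-9)$ \cite{DV}, realized by (a correspondence built from) $\PPP$, which guarantees that the only ``variable'' part of the cohomology of $\XX/B$ not already visible on $\operatorname{Gr}(3,V_{10})$ is accounted for by $\PPP$ and $\YY$; and (iii) an irreducibility (``big monodromy'') statement for the local system $\{H^{20}_{van}(X_b)\}$, so that no proper sub-local-system can enter.

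Restricting the resulting decomposition of $Z$ to a very general fibre $X_{b''}$: term (b) dies because $b''$ avoids the relevant closed set, term (a) lands in the image of $A^{11}(\operatorname{Gr}(3,V_{10}))\to A^{11}(X_{b''})$, and term (c) restricts to $(\PPP_{b''})_\ast(w)$ for a zero-cycle $w$ on $Y_{b''}$, which is visibly a $\QQ$-combination of classes $[\operatorname{Gr}(3,W_6)]$. Since the classes $[\operatorname{Gr}(3,W_6)]$ in $H^{22}(X_{b''},\QQ)$ are all equal ($Y_{b''}$ being connected) and this common class is nonzero, while $Z\vert_{X_{b''}}$ is homologically trivial, term (a) can contribute only a class lying in the span of the differences $[\operatorname{Gr}(3,W_6)]-[\operatorname{Gr}(3,W_6')]$; hence $Z\vert_{X_{b''}}$, and with it $z=Z\vert_{X_b}$, lies in the subgroup generated by the classes $[\operatorname{Gr}(3,W_6)]$. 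This proves the theorem for $X$ very general; for an arbitrary $X$ with $Y$ smooth it then follows by the standard specialisation argument, the cycles and the six-dimensional subspaces $W_6$ witnessing the rational equivalences all spreading over $B$.

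The step I expect to be the main obstacle is precisely the one where the analogy with cubic fourfolds breaks down. For a cubic fourfold the subvarieties of interest — lines — sweep out the whole hypersurface, so a Bloch--Srinivas-style decomposition of the diagonal already forces every cycle in $A^3_{hom}$ to be a combination of lines. Here, however, the Grassmannians $\operatorname{Gr}(3,W_6)$ contained in $X$ cover only a $13$-dimensional subvariety of the $20$-dimensional $X$, so one \emph{cannot} argue that $z$ is supported on the locus they sweep out; one must instead genuinely invoke the Debarre--Voisin Abel--Jacobi isomorphism to know that the correspondence $\PPP$ nevertheless captures all of the non-algebraic cohomology of $X$, and then carry out the delicate homological bookkeeping showing that the ``ambient'' (Schubert-class) and ``over the discriminant'' correction terms produced by the spread contribute nothing to $A^{11}_{hom}$ of the very general fibre. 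Checking the relative form of Voisin's vanishing and the required big-monodromy statement are the remaining technical points; all three ingredients, though, should be available from the machinery already developed in \cite{V1}.
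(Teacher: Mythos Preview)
Your proposal has the right ingredients and the right overall strategy, but it differs from the paper in one important technical point, and that point is precisely where your write-up becomes vague.

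\medskip

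\textbf{What the paper does instead.} Rather than spreading an \emph{individual} cycle $z\in A^{11}_{hom}(X_b)$ to a class $Z\in A^{11}(\XX_{B'})$ and then trying to decompose $Z$, the paper works once and for all with a single relative \emph{correspondence}
\[
\Gamma:=\mu\,\Delta_{\XX}-\Gamma_p\circ{}^t\Gamma_q\circ\Gamma_{g^2}\circ\Gamma_q\circ{}^t\Gamma_p\ \in\ A^{20}(\XX\times_{B'}\XX),
\]
built directly from the Debarre--Voisin incidence correspondence $\PPP$ and the square of the Pl\"ucker class $g$ on $\YY$. A short computation (Lemma~\ref{aj} in the paper) shows that, for very general $\sigma$, the composition $(q_\sigma)_\ast(p_\sigma)^\ast(p_\sigma)_\ast(q_\sigma)^\ast(\,\cdot\,g^2)$ equals $\mu\cdot\mathrm{id}$ on $H^{20}(X_\sigma)_{van}$; hence $\Gamma$ kills $H^{20}_{van}$ fibrewise. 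Voisin's spread theorem (stated in the paper as Theorem~\ref{claire}) then gives $(\Gamma\vert_{X_\sigma\times X_\sigma})_\ast A^{11}_{hom}(X_\sigma)=0$ \emph{for every} $\sigma\in B'$, which immediately yields the surjection $(p_\sigma)_\ast(q_\sigma)^\ast\colon A^4(Y_\sigma)\twoheadrightarrow A^{11}_{hom}(X_\sigma)$. No spreading of $z$, no base change $B'\to B$, no countability argument, and no specialization step is needed.

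\medskip

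\textbf{Where your route is shaky.} Your key step is the assertion that the spread-out cycle $Z$ decomposes in $A^{11}(\XX_{B'})$ as a sum of terms of types (a), (b), (c). The spread machinery of \cite{V0}, \cite{V1} is set up to decompose relative correspondences on $\XX\times_B\XX$, not cycles on $\XX$ itself; getting a decomposition of $Z$ of the shape you want would essentially require first running the paper's argument for the diagonal and then applying it to $z$. Moreover, your treatment of the ambient term (a) is not right: you claim it ``can contribute only a class lying in the span of the differences $[\operatorname{Gr}(3,W_6)]-[\operatorname{Gr}(3,W_6')]$'', but a class restricted from $\operatorname{Gr}(3,V_{10})$ has no reason to be such a difference in Chow (note $[\operatorname{Gr}(3,W_6)]\in A^{11}(X_\sigma)$ is \emph{not} itself ambient, since $\operatorname{Gr}(3,W_6)$ has codimension $12$ in $\operatorname{Gr}(3,V_{10})$). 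In the paper's set-up this issue simply does not arise: the ambient correction is a correspondence $\gamma\vert_{X_\sigma\times X_\sigma}$ with $\gamma\in A^{20}(M\times M)$, and one checks via Lieberman's lemma that it annihilates $A^{11}_{hom}(X_\sigma)$ because $\iota_\ast$ lands in $A^{12}_{hom}(\operatorname{Gr}(3,V_{10}))=0$. Finally, by working with the correspondence the paper obtains the conclusion for \emph{all} $\sigma\in B'$ directly, so the specialization argument you sketch at the end is unnecessary.
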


\begin{proof} As mentioned in the introduction, Voisin \cite[Theorem 2.4]{V1} has proven that
  \[ A^i_{hom}(X)=0\ \ \ \forall i>11\ .\]
  Using the Bloch--Srinivas ``decomposition of the diagonal'' method \cite{BS}, \cite[Chapter 3]{Vo} (in the precise form of \cite[Theorem 1.7]{moi}), this implies that 
    \[ \hbox{Niveau}\bigl(A^\ast(X)\bigr)\le 2\ \]
 in the language of \cite{moi}, and also (using \cite[Remark 1.8.1]{moi})
       \[ A^i_{AJ}(X)=0\ \ \ \forall i\not=11\ .\]
   But all intermediate Jacobians of $X$ are trivial (there is no odd-degree cohomology), and so   
  \[ A^i_{hom}(X)=0\ \ \ \forall i\not=11\ .\]
  That is, the $20$-dimensional variety $X$ motivically looks like a surface, and so in particular the Hodge conjecture is true for $X$ \cite[Proposition 2.4]{moi}.

Let
  \[ \XX\ \to\ B \]
  denote the universal family of smooth hypersurfaces in the linear system $\vert\LLL\vert$. The base $B$ is the Zariski open in $\PP(\wedge^3 V_{10}^\ast)$ parametrizing $3$-forms 
  $\sigma$ such that the corresponding hyperplane section
    \[ X_\sigma\ \ \subset\ \operatorname{Gr}(3,V_{10})\ \ \subset\ \PP(\wedge^3 V_{10}) \]
    is smooth.
  
  Let $B^\prime\subset B$ be the Zariski open such that the fibre $X_\sigma$ has an associated hyperk\"ahler fourfold $Y_\sigma$, in the sense of \cite{DV}. That is, $B^\prime$ parametrizes $3$-forms $\sigma$ such that both $X_\sigma$ and
  \[ Y_\sigma:= \bigl\{ W_6\in \operatorname{Gr}(6,V_{10})\ \hbox{such\ that}\   \sigma\vert_{W_6}=0   \bigr\}\ \ \subset\ \operatorname{Gr}(6,V_{10}) \]
are smooth of the expected dimension.

We rely on the spread result of Voisin's, in the following form:

\begin{theorem}[Voisin \cite{V1}]\label{claire} Let 
$\Gamma\in A^{20}(\XX\times_{B^{}} \XX)$ be a relative correspondence with the property that
  \[ (\Gamma\vert_{X_\sigma\times X_\sigma})_\ast H^{11,9}(X_\sigma)=0\ \ \ \hbox{for\ very\ general\ }\sigma\in B^{}\ .\]
 Then
  \[    (\Gamma\vert_{X_\sigma\times X_\sigma})_\ast A^{11}_{hom}(X_\sigma)=0\ \ \ \hbox{for\ all\ }\sigma\in B^{}\ .\]
\end{theorem}

(For basics on the formalism of relative correspondences, cf. \cite[Section 8.1]{MNP}.) Since theorem \ref{claire} is not stated precisely in this form in \cite{V1}, we briefly indicate the proof:

\begin{proof}(of theorem \ref{claire}) The assumption on $\Gamma$ (plus the shape of the Hodge diamond of $X_\sigma$, and the truth of the Hodge conjecture for $X_\sigma$, as shown above) implies that for the very general $\sigma\in B^{}$ there exist closed subvarieties
 $V_\sigma^i,W_\sigma^i\subset X_\sigma$ with $\dim V_\sigma^i+\dim W_\sigma^i=20$, and such that
   \[ \Gamma\vert_{X_\sigma\times X_\sigma}= \sum_{i=1}^s V_\sigma^i\times W_\sigma^i\ \ \ \hbox{in}\ H^{40}(X_\sigma\times X_\sigma)\ .\]
  One has the Noether--Lefschetz property that $H^{20}(X_\sigma,\QQ)_{van}\cap F^{10}=0$ for very general $X_\sigma$ (this is because the Picard number of the very general Debarre--Voisin hyperk\"ahler fourfold is $1$). This implies that all the subvarieties  $V_\sigma^i,W_\sigma^i$ are obtained by restriction from subvarieties of $\operatorname{Gr}(3,V_{10})$, hence they exist universally. (Instead of evoking Noether--Lefschetz, one could also apply Voisin's Hilbert scheme argument \cite[Proposition 3.7]{V0} to obtain that the $V_\sigma^i,W_\sigma^i$ exist universally). That is, there exist closed subvarieties $\VV^i, \WW^i\subset \XX$ with $\codim \VV^i+\codim \WW^i=20$, and a cycle $\delta$ supported on $\cup  \VV^i\times_B \WW^i$,  such that
  \[ \bigl(  \Gamma - \delta)\vert_{X_\sigma\times X_\sigma}=0 \ \ \ \hbox{in}\ H^{40}(X_\sigma\times X_\sigma)\ ,\ \ \ \hbox{for\ very\ general\ }\sigma\in B\ .\] 
  We now define a relative correspondence
  \[ R:=   \Gamma - \delta\ \ \in A^{20}(\XX\times_{B}\XX)\ .\]

 For brevity, from now on let us write $M:=\operatorname{Gr}(3,V_{10})$. Since $M$ has trivial Chow groups (this is true for all Grassmannians, and more generally for {\em linear varieties\/}, cf. \cite[Theorem 3]{Tot}), 
 and the hypersurfaces $X_\sigma$ have non-zero primitive cohomology (indeed $h^{11,9}(X_\sigma)=1$),
 we are in the set--up of \cite{V1}.  
As in loc. cit., we consider the blow-up $\wt{{M}\times{M}}$ of ${M}\times{M}$ along the diagonal, and the quotient morphism $\mu\colon\wt{{M}\times{M}}\to M^{[2]}$ to the Hilbert scheme of length $2$ subschemes. Let $\bar{B}:=\PP H^0({M},\LLL)$
and as in \cite[Lemma 1.3]{V1}, introduce the incidence variety
      \[ I:=\bigl\{ (\sigma,y)\in \bar{B}\times\wt{{M}\times{M}}\ \vert\ s\vert_{\mu(y)}=0\bigr\}\ .\]
Since $\LLL$ is very ample on ${M}$, $I$ has the structure of a projective bundle over $\wt{M\times M}$. 

Next, let us consider 
  \[ f\colon\ \ \wt{\XX\times_B \XX}\ \to\ \XX\times_B \XX\ ,\]
  the blow-up along the relative diagonal $\Delta_\XX$. There is an open inclusion $\wt{\XX\times_B \XX}\subset I$.
  Hence, given our relative correspondence $R\in A^n(\XX\times_B \XX)$ as above, there exists a (non-canonical) cycle
  $\bar{R}\in A^n(I)$ such that
  \[  \bar{R}\vert_{\wt{\XX\times_B \XX}}= f^\ast(R)\ \ \ \hbox{in}\ A^n({\wt{\XX\times_B \XX}})\ .\]
  Hence, we have
  \[ \bar{R}\vert_{\wt{X_\sigma\times X_\sigma}}= \bigl(f^\ast(R)\bigr)\vert_{\wt{X_\sigma\times X_\sigma}}= (f_\sigma)^\ast(R\vert_{{X_\sigma\times X_\sigma}})    =0\ \ \ \hbox{in}\ H^{40}(    \wt{X_\sigma\times X_\sigma})\ ,\]
  for $\sigma\in B$ very general, by assumption on $R$. (Here, as one might guess, the notation 
   \[ f_\sigma\colon \wt{X_\sigma\times X_\sigma}\ \to\ X_\sigma\times X_\sigma\] 
   indicates the blow-up along the diagonal $\Delta_{X_\sigma}$.)
  
  We now apply \cite[Proposition 1.6]{V1} to the cycle $\bar{R}$. The result is that there exists a cycle $\gamma\in A^{20}(M\times M)$ such that there is a rational equivalence
  \[  R\vert_{X_\sigma\times X_\sigma} =     (f_\sigma)_\ast (\bar{R}\vert_{\wt{X_\sigma\times X_\sigma}})=  \gamma\vert_{X_\sigma\times X_\sigma}\ \ \ \hbox{in}\ A^{20}(X_\sigma\times X_\sigma)\ \ \ \forall \sigma\in B\ .\]
  We know that the restriction of $\gamma$ acts as zero on $A^{11}_{hom}(X_\sigma)$. 
  (Indeed, let $\iota\colon X_\sigma\to M$ denote the inclusion, and let $a\in A^{11}_{hom}(X_\sigma)$. With the aid of Lieberman's lemma \cite[Lemma 3.3]{V6}, one finds that
    \[   \bigl((\iota\times\iota)^\ast(\gamma)\bigr){}_\ast(a)=  \iota^\ast \gamma_\ast \iota_\ast(a)\ \ \ \hbox{in}\ A^{11}_{hom}(X_\sigma)\ .\]
    But $\iota_\ast(a)\in A^{12}_{hom}(M)=0$). 
    
    Thus, it follows that
  \[ \bigl(  R\vert_{X_\sigma\times X_\sigma}\bigr){}_\ast=0\colon\ \   A^{11}_{hom}(X_\sigma)\ \to\   A^{11}_{hom}(X_\sigma) \ \ \ \forall \sigma\in B\ .\]
  For any given $\sigma\in B$, one can construct the subvarieties $\VV^i, \WW^i\subset \XX$ in the above argument in such a way that they are in general position with respect to the fibre $X_\sigma$. This implies that the restriction  
  \[   \delta\vert_{X_\sigma\times X_\sigma}\ \ \in A^{20}(X_\sigma\times X_\sigma) \]
  is a {\em completely decomposed cycle\/}, i.e. a cycle supported on a union of subvarieties $V^\sigma_j\times W^\sigma_j\subset   X_\sigma\times X_\sigma$
   with $\codim(V_j^\sigma)+\codim(W_j^\sigma)=20$. But completely decomposed cycles do not act on $A^\ast_{hom}()$ \cite{BS}, and so
   \[      \bigl(  \Gamma\vert_{X_\sigma\times X_\sigma}\bigr){}_\ast   =   \bigl(  (R +\delta){}\vert_{X_\sigma\times X_\sigma}\bigr){}_\ast=0\colon\ \   A^{11}_{hom}(X_\sigma)\ \to\   A^{11}_{hom}(X_\sigma) \ \ \ \forall \sigma\in B\ .\]
   This ends the proof of theorem \ref{claire}.
  \end{proof}

Let us now pick up the thread of the proof of theorem \ref{main}. As in \cite[Section 2]{DV}, for any $3$-form $\sigma\in B^\prime$ let
  \[ G_\sigma:= \Bigl\{  (W_3, W_6)\ \in \operatorname{Gr}(3,V_{10})\times \operatorname{Gr}(6,V_{10})\ \big\vert\ W_3\subset W_6\ ,\ \sigma\vert_{W_6}=0\ \Bigr\} \]
  denote the incidence variety, with projections
     \[ \begin{array}[c]{ccc}
       G_\sigma&\xrightarrow{p_\sigma}& X_\sigma\\
       \ \ \ \ \downarrow{\scriptstyle q_\sigma}&&\\
         \ \  Y_\sigma\ .&&\\
          \end{array}\]
    The fibres of $q_\sigma$ are $9$-dimensional Grassmannians $\operatorname{Gr}(3,W_{6})$.    
    
    Let $\YY\to B^\prime$ denote the universal family of Debarre--Voisin fourfolds (i.e., $\YY\subset \operatorname{Gr}(6,V_{10})\times B^\prime$ is the subvariety of pairs
    $(W_6,\sigma)$ such that $\sigma\vert_{W_6}=0$), and let
    $\GG\to B^\prime$ be the relative version of $G_\sigma$, with projections
     \[ \begin{array}[c]{ccc}
       \GG&\xrightarrow{p}& \XX\\
       \ \ \ \ \downarrow{\scriptstyle q}&&\\
         \ \  \YY\ .&&\\
          \end{array}\]

     We will rely on an Abel--Jacobi type result from \cite{DV}, concerning the {\em vanishing cohomology\/} defined as
     \[ \begin{split}  H^{20}(X_\sigma,\QQ)_{van}&:= \ker\bigl( H^{20}(X_\sigma,\QQ)\ \to\ H^{22}(\operatorname{Gr}(3,V_{10}),\QQ)\bigr)\ ,\\
                          H^2(Y_\sigma,\QQ)_{van}&:=\ker\bigl( H^2(Y_\sigma,\QQ)\ \to\ H^{42}(\operatorname{Gr}(6,V_{10}),\QQ)\bigr)\ .\\
                          \end{split}\]

     \begin{lemma}\label{aj} Let $\sigma\in B^\prime$ be very general. Then there is an isomorphism
     \[ (q_\sigma)_\ast (p_\sigma)^\ast\colon\ \  H^{20}(X_\sigma,\QQ)_{van}\ \xrightarrow{\cong}\ H^2(Y_\sigma,\QQ)_{van}\ . \]     
     The inverse isomorphism is given by
     \[  H^2(Y_\sigma,\QQ)_{van}\ \xrightarrow{\cdot {1\over \mu}\, g^2}\ H^6(Y_\sigma,\QQ)_{van}\ \xrightarrow{(p_\sigma)_\ast (q_\sigma)^\ast}\  H^{20}(X_\sigma,\QQ)_{van}\ .\]
 (Here $\mu\in\QQ$ is some non-zero number independent of $\sigma$, and $g\in A^1(Y_\sigma)$ is the Pl\"ucker polarization.)
 \end{lemma}
 
 \begin{proof} The first part (i.e. the fact that $(q_\sigma)_\ast (p_\sigma)^\ast$ is an isomorphism on the vanishing cohomology) is \cite[Theorem 2.2 and Corollary 2.7]{DV}. 
 For the second part, we observe that the dual map (with respect to cup product)
  \[  (p_\sigma)_\ast (q_\sigma)^\ast\colon\ \  H^6(Y_\sigma,\QQ)_{van}\ \xrightarrow{}\  H^{20}(X_\sigma,\QQ)_{van}  \]
  is also an isomorphism. In particular, using hard Lefschetz, this means that the composition
   \[   H^2(Y_\sigma,\QQ)_{van}\ \xrightarrow{\cdot  g^2}\ H^6(Y_\sigma,\QQ)_{van}\ \xrightarrow{(p_\sigma)_\ast (q_\sigma)^\ast}\  H^{20}(X_\sigma,\QQ)_{van}\   
       \xrightarrow{(q_\sigma)_\ast (p_\sigma)^\ast}\  H^2(Y_\sigma,\QQ)_{van} \]
       is non-zero (and actually an isomorphism). Hence, the assignment
    \[  <\alpha,\beta>_{\rm vv}:= <\alpha, (q_\sigma)_\ast (p_\sigma)^\ast  (p_\sigma)_\ast (q_\sigma)^\ast   (g^2\cdot \beta) >_{Y_\sigma} \]
    defines a polarization on $ H^{2}(Y_\sigma,\QQ)_{van}$. Here, $     <\alpha,\beta>_{Y_\sigma}$ is the Beauville--Bogomolov form.   
     However, as explained in \cite[Proof of Lemma 2.2]{V1}, for very general $\sigma$ the Hodge structure on  $ H^{2}(Y_\sigma,\QQ)_{van}$ is simple, and admits a unique polarization up to a coefficient. That is, there exists a non-zero number $\mu\in\QQ$ such that
    \[        <\alpha,\beta>_{\rm vv}  =\mu <\alpha,\beta>_{Y_\sigma} \ .\]
    The Beauville--Bogomolov form being non-degenerate, this proves that
    \[  (q_\sigma)_\ast (p_\sigma)^\ast  (p_\sigma)_\ast (q_\sigma)^\ast   (g^2\cdot \beta)=\mu \,\beta\ \ \ \forall \beta\ \in\ H^{2}(Y_\sigma,\QQ)_{van}\ .\]
    Reasoning likewise starting from $  H^{20}(X_\sigma,\QQ)_{van}$ (now using the cup product instead of the Beauville--Bogomolov form), we find that the other composition is also the identity.    
    
   Finally, the fact that the constant $\mu$ is the same for all fibres $X_\sigma$ is because the map in cohomology $H^2(Y_\sigma,\QQ)_{van}\to H^{20}(X_\sigma,\QQ)_{van}$ is locally constant in the family. 
    \end{proof}

Let us define the relative correspondence
  \[ \Gamma:= \mu \Delta_\XX -    \Gamma_p\circ {}^t \Gamma_q \circ \Gamma_{g^2} \circ \Gamma_q\circ {}^t \Gamma_p   \ \ \in\ A^{20}(\XX\times_{B^\prime}\XX)\ , \]
  where $\Gamma_{g^2}\in A^6(\YY\times_{B^\prime}\YY)$ is the correspondence acting fibrewise as intersection with two Pl\"ucker hyperplanes.
  Lemma \ref{aj} implies that
  \[ (\Gamma\vert_{X_\sigma\times X_\sigma})_\ast  H^{20}(X_\sigma,\QQ)_{van}=0\ \ \ \hbox{for\ very\ general\ }\sigma\in B^\prime\ .\]
  That is, the relative correspondence $\Gamma$
  satisfies the assumption of theorem \ref{claire}. Thanks to theorem \ref{claire}, we thus conclude that
  \[    (\Gamma\vert_{X_\sigma\times X_\sigma})_\ast   A^{11}_{hom}(X_\sigma)=0\ \ \ \forall \sigma\in B\ .\]
  Unraveling the definition of $\Gamma$, this means in particular that there is a surjection
   \[   (p_\sigma)_\ast (q_\sigma)^\ast\colon\ \      A^4_{hom}(Y_\sigma)\  \twoheadrightarrow\ A^{11}_{hom}(X_\sigma) \ \ \ \forall \sigma\in B^\prime\ .\]    
   As we have seen, for any point $y\in Y_\sigma$ the fibre $(q_\sigma)^{-1}(y)$ is a $9$-dimensional Grassmannian $\operatorname{Gr}(3,W_6)$ such that the $3$-form $\sigma$ vanishes on $W_6$. Such a Grassmannian is contained in the hypersurface $X_\sigma$, and so 
   \[ (p_\sigma)_\ast (q_\sigma)^\ast   (y) = \operatorname{Gr}(3,W_6)\ \ \ \hbox{in}\ A^{11}(X_\sigma)\ \ \ \forall y\in Y_\sigma\ .\]
   The theorem is proven.   
    \end{proof}

\begin{remark} The above argument actually shows that
  \[ A^{11}_{hom}(X_\sigma) \ \xrightarrow{ (q_\sigma)_\ast (p_\sigma)^\ast}\ A^2_{hom}(Y_\sigma)\ \xrightarrow{\cdot g^2}\ A^4_{hom}(Y_\sigma)\ \xrightarrow{ (p_\sigma)_\ast 
  (q_\sigma)^\ast}\ A^{11}_{hom}(X_\sigma) \]
  is a non-zero multiple of the identity, for any $\sigma\in B^\prime$. This is very much reminiscent of cubic fourfolds and their Fano varieties of lines \cite{BD}, \cite{SV}.
  Inspired by this analogy, it is tempting to ask the following: can one somehow prove that
    \[ \ima \bigl( A^{11}_{}(X_\sigma)\to A^4_{}(Y_\sigma)\bigr) \]
    is the same as the subgroup of $0$-cycles supported on a uniruled divisor ?
  \end{remark}

\section{An injectivity result}

\begin{theorem}\label{main2} Let $\LLL$ be the Pl\"ucker polarization on $\operatorname{Gr}(3,V_{10})$, and let $X\in\vert \LLL\vert$ be a smooth hypersurface.  
  Let $R^{11}(X)\subset A^{11}(X)_{\QQ}$ be the subgroup containing intersections of two cycles of positive codimension, the Chern class $c_{11}(T_X)$ and the image of the restriction map $A^{11}(\operatorname{Gr}(3,V_{10}))_{}\to A^{11}(X)_{}$. The cycle class map induces an injection
    \[ R^{11}(X)\ \hookrightarrow\ H^{22}(X,\QQ)\ .\]
    \end{theorem}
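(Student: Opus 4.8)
The plan is to show that every generator of $R^{11}(X)$ is in fact the restriction of a class coming from the Grassmannian $M:=\operatorname{Gr}(3,V_{10})$, and then to invoke the fact that $M$ has trivial Chow groups — so that $A^{11}(M)_{\QQ}\hookrightarrow H^{22}(M,\QQ)$ — together with a Lefschetz-type statement in the range $H^{22}$ to conclude injectivity on $X$. More precisely, first I would observe that $c_{11}(T_X)$ is expressible, via the conormal exact sequence $0\to N_{X/M}^\vee\to \Omega_M^1|_X\to \Omega_X^1\to 0$ and the identification $N_{X/M}=\LLL|_X$, as a polynomial in $\iota^\ast c_\bullet(T_M)$ and $\iota^\ast c_1(\LLL)$, hence lies in the image of $\iota^\ast\colon A^{11}(M)_{\QQ}\to A^{11}(X)_{\QQ}$; the image of the restriction map is of course tautologically in that image. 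The substantive point is the term coming from products $A^a(X)_{\QQ}\cdot A^b(X)_{\QQ}$ with $a+b=11$, $a,b>0$.

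For the product part I would argue as follows. For $a$ with $1\le a\le 10$, the composition $A^a(X)_{\QQ}\xrightarrow{\iota_\ast} A^{a+1}(M)_{\QQ}\xrightarrow{\iota^\ast} A^{a+1}(X)_{\QQ}$ equals multiplication by $c_1(\LLL|_X)$, so to control $A^a(X)_{\QQ}\cdot A^b(X)_{\QQ}$ it suffices to know that $\iota^\ast\colon A^a(M)_{\QQ}\to A^a(X)_{\QQ}$ is surjective for $a\le 10$. This is where the hypersurface structure enters: by the weak Lefschetz theorem the restriction $H^{2a}(M,\QQ)\to H^{2a}(X,\QQ)$ is an isomorphism for $2a<\dim X=20$ and injective for $2a=20$; since $M$ has trivial Chow groups, $A^a(M)_{\QQ}\cong H^{2a}(M,\QQ)$, and since — as established in the proof of theorem \ref{main} — the variety $X$ "motivically looks like a surface" with $A^a_{hom}(X)_{\QQ}=0$ for all $a\ne 11$, the cycle class map $A^a(X)_{\QQ}\hookrightarrow H^{2a}(X,\QQ)$ is injective for $a\le 10$. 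Combining these, $\iota^\ast\colon A^a(M)_{\QQ}\to A^a(X)_{\QQ}$ is an isomorphism for $1\le a\le 9$ and injective-with-the-right-image behaviour near the middle; in any case any class in $A^a(X)_{\QQ}$ with $a\le 9$ is $\iota^\ast$ of a class on $M$. Then for $\alpha\in A^a(X)_{\QQ}$, $\beta\in A^b(X)_{\QQ}$ with $a+b=11$ and (say) $a\le 9$, write $\alpha=\iota^\ast\tilde\alpha$; by the projection formula $\alpha\cdot\beta=\iota^\ast\tilde\alpha\cdot\beta=\iota^\ast(\tilde\alpha\cdot\iota_\ast\beta)$, and $\iota_\ast\beta\in A^{b+1}(M)_{\QQ}$, so $\alpha\cdot\beta\in\ima(\iota^\ast)$ as well. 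Thus $R^{11}(X)\subseteq\ima\bigl(\iota^\ast\colon A^{11}(M)_{\QQ}\to A^{11}(X)_{\QQ}\bigr)$.

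To finish: $A^{11}(M)_{\QQ}\cong H^{22}(M,\QQ)$ since $M$ has trivial Chow groups, and the restriction $H^{22}(M,\QQ)\to H^{22}(X,\QQ)$ is injective by weak Lefschetz (indeed it is $2a=22>20$, so one uses hard Lefschetz on $M$ together with the fact that $X$ is an ample divisor — the composite $H^{22}(M)\to H^{22}(X)\xrightarrow{\iota_\ast} H^{24}(M)$ is cup with $[X]=c_1(\LLL)$, which is injective on $H^{22}(M)$ by hard Lefschetz). Hence the cycle class map restricted to $\ima(\iota^\ast)$ factors as $A^{11}(M)_{\QQ}\twoheadrightarrow\ima(\iota^\ast)\to H^{22}(X,\QQ)$ with the outer composite $A^{11}(M)_{\QQ}\cong H^{22}(M,\QQ)\hookrightarrow H^{22}(X,\QQ)$ injective, forcing the cycle class map to be injective on $\ima(\iota^\ast)$, a fortiori on the subgroup $R^{11}(X)$.

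I expect the main obstacle to be the book-keeping near the middle dimension — the restriction maps $\iota^\ast$ on $A^a$ are only guaranteed surjective (not obviously isomorphisms) for $a$ close to $10$, and one must be a little careful that the decomposition-of-the-diagonal input from theorem \ref{main} (giving $A^a_{hom}(X)_{\QQ}=0$ for $a\ne 11$) is genuinely available for an \emph{arbitrary} smooth $X\in|\LLL|$, not only for those with smooth associated $Y$. In fact Voisin's vanishing $A^i_{hom}(X)_{\QQ}=0$ for $i>11$ holds for all smooth $X$, which is all that is needed for $a\le 10$; and the surjectivity $\iota^\ast\colon A^a(M)_{\QQ}\twoheadrightarrow A^a(X)_{\QQ}$ for $a\le 10$ can alternatively be extracted directly from the Bloch–Srinivas argument (niveau $\le 2$ plus triviality of $A^\ast(M)_{\QQ}$) without reference to $Y$, which is why the hypothesis on $Y$ can be dropped in the statement of theorem \ref{main2}.
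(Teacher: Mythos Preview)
Your overall strategy coincides with the paper's: show $R^{11}(X)\subseteq\ima\bigl(\iota^\ast\colon A^{11}(M)\to A^{11}(X)\bigr)$, then show this image injects into $H^{22}(X)$ by analysing the composite $A^{11}(M)\to A^{11}(X)\xrightarrow{\iota_\ast}A^{12}(M)$. The paper wraps this in a ``generalized Franchetta'' statement for the universal family, but the content is the same. However, there are two genuine problems in your execution.

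First, the displayed ``projection formula'' $\iota^\ast\tilde\alpha\cdot\beta=\iota^\ast(\tilde\alpha\cdot\iota_\ast\beta)$ is false: the left side lies in $A^{11}(X)$ while the right side lies in $A^{12}(X)$. What you actually need is simpler. For $2\le a\le 9$ one has $2\le b\le 9$ as well, so \emph{both} factors lie in $\ima(\iota^\ast)$ and hence so does their product, since $\iota^\ast$ is a ring map. The delicate case is $\{a,b\}=\{1,10\}$, where $A^{10}(X)$ need not come from $M$; here the paper uses that $A^1(X)$ is spanned by $g=\iota^\ast c_1(\LLL)$ and the self-intersection formula $g\cdot\beta=\iota^\ast\iota_\ast(\beta)$, which lands in $\ima(\iota^\ast)$ directly.

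Second, and more seriously, your final step invokes hard Lefschetz to get injectivity of $\cup\,c_1(\LLL)\colon H^{22}(M)\to H^{24}(M)$. But $22>\dim_\C M=21$, and in this range hard Lefschetz (via the primitive decomposition) gives only \emph{surjectivity} of $L$, not injectivity. Injectivity here is equivalent, by Poincar\'e duality, to surjectivity of $\cdot\,c_1(\LLL)\colon A^9(M)\to A^{10}(M)$, and this is not a formal fact: it is specific to $\operatorname{Gr}(3,V_{10})$ and is precisely the computation the paper carries out, writing $A^\ast(M)=\QQ[c_1,c_2,c_3]/I$ and checking by hand that every monomial basis element of $A^{10}(M)$ is divisible by $c_1$. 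Without this computation (or an equivalent dimension count showing $\dim H^{22}(M)=\dim H^{24}(M)$), your argument has a gap at exactly the point where the actual work lies.
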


In order to prove theorem \ref{main2}, we first establish a ``generalized Franchetta conjecture'' type of statement (for more on the generalized Franchetta conjecture, cf. \cite{OG}, \cite{PSY}, \cite{FLV}):

\begin{theorem}\label{gfc} Let $\XX\to B$ denote the universal family of Pl\"ucker hyperplanes in $\operatorname{Gr}(3,V_{10})$ (as in section \ref{sm1}).
Let $\Psi\in A^{11}(\XX)$ be such that
  \[ \Psi\vert_{X_\sigma}=0\ \ \hbox{in}\ H^{22}(X_\sigma)\ \ \ \forall \sigma\in B\ .\]
  Then
   \[ \Psi\vert_{X_\sigma}=0\ \ \hbox{in}\ A^{11}(X_\sigma)\ \ \ \forall \sigma\in B\ .\]
\end{theorem}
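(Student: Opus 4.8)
The plan is to use the spread method once more, now applied to the single cycle $\Psi$ rather than to a relative correspondence, together with the fact established in the proof of theorem \ref{main} that $X_\sigma$ ``motivically looks like a surface''. First I would reduce to the case where the base is $B$ (or even the full projective space $\bar{B}=\PP H^0(M,\LLL)$ away from the discriminant), and I would note that since $\Psi|_{X_\sigma}$ is homologically trivial for every $\sigma$, and since $A^{11}_{hom}(X_\sigma)$ is the only non-trivial Chow group with homologically trivial cycles, it suffices to show that $\Psi|_{X_\sigma}=0$ in $A^{11}_{hom}(X_\sigma)$.

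The key step is to apply Voisin's spread argument --- essentially the same machinery packaged in theorem \ref{claire}, but in its original ``zero-cycle'' incarnation from \cite{V0}, \cite{V1}, \cite{Vo} --- to the cycle $\Psi$. Concretely, one views $\Psi \in A^{11}(\XX)$ via the graph-type construction: the hypothesis that $\Psi|_{X_\sigma}$ vanishes in $H^{22}(X_\sigma)$ for \emph{every} $\sigma$ forces, for $\sigma$ very general, a decomposition of $\Psi|_{X_\sigma}$ into a completely decomposed piece coming (by the Noether--Lefschetz / Hilbert-scheme argument already used in the proof of theorem \ref{claire}) from universal subvarieties $\VV^i \subset \XX$, plus a correction term $\Psi - \delta$ which can be lifted to the incidence variety $I$ (a projective bundle over $\wt{M\times M}$, or rather over $M$ in this absolute setting) and then, by \cite[Proposition 1.6]{V1}, pushed forward to come from a cycle $\gamma \in A^{11}(M)$. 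Since $M=\operatorname{Gr}(3,V_{10})$ has trivial Chow groups, $\gamma|_{X_\sigma}$ lies in the image of the restriction map $A^{11}(M) \to A^{11}(X_\sigma)$, which consists of homologically non-trivial classes; combined with $\Psi|_{X_\sigma} = 0$ in cohomology this pins down $\gamma|_{X_\sigma}$ to be rationally equivalent to zero modulo $A^{11}_{hom}$. Meanwhile the completely decomposed part $\delta|_{X_\sigma}$ is supported in positive codimension on both factors, hence (by Bloch--Srinivas \cite{BS}) acts trivially on $0$-cycles, so its contribution to $A^{11}_{hom}(X_\sigma)$ vanishes. Putting the pieces together gives $\Psi|_{X_\sigma} = 0$ in $A^{11}_{hom}(X_\sigma)$, hence in $A^{11}(X_\sigma)$, for every $\sigma\in B$.

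The main obstacle, as usual with the spread method, is the passage from ``very general $\sigma$'' to ``all $\sigma$'': the decomposition of $\Psi|_{X_\sigma}$ is a priori only available fibrewise over a very general point, and one must genuinely produce the universal subvarieties $\VV^i\subset\XX$ and the universal cycle $\gamma$ on $M\times B$ (or on $M$) so that the resulting identity $\Psi = \gamma|_\XX + \delta$ holds as an honest rational equivalence of cycles on $\XX$, which then restricts to every fibre. This is exactly the content of \cite[Proposition 1.6]{V1} and of Voisin's Hilbert-scheme spreading argument \cite[Proposition 3.7]{V0}, and the only thing to check is that the hypotheses of those results apply here: namely that $M$ has trivial Chow groups (true for Grassmannians, \cite[Theorem 3]{Tot}), that the primitive cohomology $H^{20}(X_\sigma)_{van}$ is non-zero (true, $h^{11,9}=1$), and that the Noether--Lefschetz obstruction vanishes for very general $\sigma$ (true, since the very general Debarre--Voisin hyperk\"ahler fourfold has Picard number $1$, so $H^{20}(X_\sigma)_{van}\cap F^{10}=0$). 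Once these inputs are in place the argument is a near-verbatim repetition of the proof of theorem \ref{claire}, with a relative self-correspondence replaced by the single codimension-$11$ cycle $\Psi$.
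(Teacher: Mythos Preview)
Your proposal conflates two different settings and leaves the essential step unproven.

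First, the spread machinery you invoke (theorem \ref{claire}, \cite[Proposition 1.6]{V1}) is designed for relative \emph{correspondences} on $\XX\times_B\XX$: the notion of a ``completely decomposed'' cycle $\sum V_i\times W_i$ and the Bloch--Srinivas fact that such cycles ``act trivially on $A^\ast_{hom}$'' only make sense on a product. Here $\Psi$ lives on $\XX$ itself; there are no ``two factors'', and $\Psi$ does not act on anything. Your second paragraph is essentially a transcription of the correspondence argument into a context where its vocabulary is meaningless. (What you are groping for---that $\bar{\XX}$ is a projective bundle over $M$, so that any $\Psi$ restricted to a fibre comes from some $\gamma\in A^{11}(M)$---is true and easy, via the projective-bundle formula; no spreading, Noether--Lefschetz, or Hilbert-scheme argument is needed.)

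Second, and more seriously, even once you know $\Psi\vert_{X_\sigma}=\iota^\ast(\gamma)$ for some $\gamma\in A^{11}(M)$, you assert without proof that the image of $A^{11}(M)\to A^{11}(X_\sigma)$ ``consists of homologically non-trivial classes''. This is precisely the point at issue: why can't a non-zero $\iota^\ast(\gamma)$ be homologically trivial? The paper settles this by a concrete computation: since $\iota_\ast\iota^\ast(\gamma)=\gamma\cdot c_1(\LLL)$ in $A^{12}(M)=H^{24}(M)$, it suffices that multiplication by $c_1(\LLL)$ be injective on $A^{11}(M)$, equivalently (by hard Lefschetz) surjective from $A^9(M)$ to $A^{10}(M)$; this is then checked directly from the presentation $A^\ast(\operatorname{Gr}(3,V_{10}))=\QQ[c_1,c_2,c_3]/I$. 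Your sketch contains nothing that would substitute for this step. Without it, you only know $\Psi\vert_{X_\sigma}\in A^{11}_{hom}(X_\sigma)$, which is the hypothesis, not the conclusion.

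In short: the paper's proof is far more elementary than what you propose (projective-bundle formula plus an explicit Chow-ring check), and the hard part---injectivity of $\iota^\ast$ composed with the cycle class map---is exactly what your argument skips.
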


\begin{proof} This is a two-step argument:

\begin{claim}\label{c1} There is equality
  \[ \ima\bigl( A^{11}(\XX)\to A^{11}(X_\sigma)\bigr)= \ima\bigl(  A^{11}(\operatorname{Gr}(3,V_{10}))_{}\to A^{11}(X_\sigma)\bigr)\ \ \ \forall \sigma\in B\ .\]
 \end{claim}
 
 \begin{claim}\label{c2} Restriction of the cycle class map induces an injection
  \[   \ima\bigl(  A^{11}(\operatorname{Gr}(3,V_{10}))_{}\to A^{11}(X_\sigma)\bigr)\ \hookrightarrow\ H^{22}(X_\sigma)  \ \ \ \forall \sigma\in B\ .\]
 \end{claim}
 
Clearly, the combination of these two claims proves theorem \ref{gfc}. 
 To prove claim \ref{c1}, let $\bar{B}:=\PP H^0( \operatorname{Gr}(3,V_{10}),\LLL)$ and let
    \[ \begin{array}[c]{ccc}
       \bar{\XX}&\xrightarrow{\pi}& \operatorname{Gr}(3,V_{10})  \\
       \ \ \ \ \downarrow{\scriptstyle \phi}&&\\
         \ \  \bar{B}\ &&\\
          \end{array}\]    
          denote the universal hyperplane (including the singular hyperplanes). The morphism $\pi$ is a projective bundle, and so any $\Psi\in A^{11}(\bar{\XX})$ can be written
          \[ \Psi=    \sum_\ell \pi^\ast( a_\ell)   \cdot \phi^\ast(h^\ell)\ \ \ \hbox{in}\ A^{11}(\bar{\XX})\ ,\]
         where $a_\ell\in A^{11-\ell}( \operatorname{Gr}(3,V_{10}))$ and $h:=c_1(\OO_{\bar{B}}(1))\in A^1(\bar{B})$.
      For any $\sigma\in B$, the restriction of $\phi^\ast(h)$ to the fibre $X_\sigma$ vanishes, and so
      \[ \Psi\vert_{X_\sigma} = a_0\vert_{X_\sigma}\ \ \ \hbox{in}\ A^{11}(X_\sigma)\ ,\]
      which establishes claim \ref{c1}.
      
  Let us prove claim \ref{c2}. For any given $\sigma\in B$, let $\iota\colon X_\sigma\to  \operatorname{Gr}(3,V_{10})$ denote the inclusion morphism. We know that
   \[  \iota_\ast \iota^\ast\colon\ \ A^j( \operatorname{Gr}(3,V_{10}))\ \to\   A^{j+1}( \operatorname{Gr}(3,V_{10}))  \]
   equals multiplication by the ample class $c_1(\LLL)\in A^1( \operatorname{Gr}(3,V_{10}))$.
   Now let
   \[ b\in A^{11}( \operatorname{Gr}(3,V_{10})) \]
   be such that the restriction $\iota^\ast(b)\in A^{11}(X_\sigma)$ is homologically trivial. Then we have that also
   \[ b\cdot c_1(\LLL)= \iota_\ast \iota^\ast(b) =0\ \ \ \hbox{in}\ H^{24}( \operatorname{Gr}(3,V_{10}))=A^{12}( \operatorname{Gr}(3,V_{10}))\ .\]
   To conclude that $b=0$, it suffices to show that
   \[ \cdot c_1(\LLL)\colon\ \ A^{11}( \operatorname{Gr}(3,V_{10}))\ \to\ A^{12}(  \operatorname{Gr}(3,V_{10})) \]
   is injective (and hence, by hard Lefschetz, an isomorphism). By hard Lefschetz, this is equivalent to showing that
       \[ \cdot c_1(\LLL)\colon\ \ A^{9}( \operatorname{Gr}(3,V_{10}))\ \to\ A^{10}(  \operatorname{Gr}(3,V_{10})) \] 
      is surjective (hence an isomorphism). 
    
    According to \cite[Theorem 5.26]{EH}, the Chow ring of the Grassmannian is of the form
    \[ A^\ast (\operatorname{Gr}(3,V_{10}))   =  \QQ[c_1,c_2,c_3]/ I\ ,   \]
    where $c_j\in A^j (\operatorname{Gr}(3,V_{10}))$ are Chern classes of the universal subbundle, and $I$ is a certain complete intersection ideal generated by the $3$ relations 
      \[   \begin{split} &c_1^8+ 7c_1^6c_2 + 15 c_1^4c_2^2+10 c_1^2c_2^3 +\cdots+3c_2c_3^2\ ,\\
                                 & c_1^9+ 8c_1^7c_2+21c_1^5 c_2^2 +  20 c_1^3c_2^3    +\cdots + c_3^3\ ,\\
                                 &c_1^{10}+9c_1^8c_2+  28 c_1^6 c_2^2 +  35 c_1^4 c_2^3     +\cdots + 4c_1c_3^3\ \ ,\\
        \end{split}\]
      in degree $8, 9, 10$.
    With the aid of the relations in $I$, we find that
    \[ A^{10}( \operatorname{Gr}(3,V_{10})) =\QQ[ c_1^{10}, c_1^8c_2, c_1^6 c_2^2,c_1^4c_2^3,c_1^7c_3,c_1^5c_2c_3,c_1^4c_3^2,c_1^3c_2^2c_3,c_1^2c_2c_3^2, c_1c_2^3c_3]
    \]
    is $10$-dimensional (the classes $c_1^2 c_2^4, c_2^5$ are eliminated thanks to the relation in degree $8$ containing $c_2^4$; the class $c_1 c_3^3$ is eliminated thanks to the relation in degree $9$; the class $c_2^2 c_3^2$ is eliminated thanks to the relation in degree $10$).
   Inspecting this description of $A^{10}( \operatorname{Gr}(3,V_{10}))$, we observe that the inclusion 
   \[ c_1\cdot A^9( \operatorname{Gr}(3,V_{10})) \ \subset \ A^{10}( \operatorname{Gr}(3,V_{10})) \]
   is an equality. Since $c_1$ is proportional to $c_1(\LLL)$, this proves claim \ref{c2}.                     
\end{proof}

It remains to prove theorem \ref{main2}:

\begin{proof}(of theorem \ref{main2}) Clearly, the Chern class is universally defined: for any $\sigma\in B$, we have
  \[ c_{11}(T_{X_\sigma})= c_{11}(T_{\XX/B})\vert_{X_\sigma}\ .\]
  Also, the image
  \[ \ima \bigl( A^{11}(\operatorname{Gr}(3,V_{10}))_{}\to A^{11}(X_\sigma)\bigr)  \]
  consists of universally defined cycles. (For a given $a\in A^{11}(\operatorname{Gr}(3,V_{10}))$, the relative cycle
  \[   (a\times B)\vert_\XX\ \ \in\ A^{11}(\XX) \]
  does the job.)
  
  Likewise, for any $j<10$ the fact that $A^j_{hom}(X_\sigma)=0$, combined with weak Lefschetz in cohomology, implies that
  \[ A^j(X_\sigma)=\ima\bigl(  A^{j}(\operatorname{Gr}(3,V_{10}))_{}\to A^{j}(X_\sigma)\bigr)  \ ,\]
  and so $ A^j(X_\sigma)$ consists of universally defined cycles for $j<10$. In particular, all intersections
  \[   A^j(X_\sigma)\cdot A^{11-j}(X_\sigma)\ \ \subset\ A^{11}(X_\sigma)\ ,\ \ \ 1<j<10 \]
  consist of universally defined cycles.
  
  It remains to make sense of intersections
   \[ A^{10}(X_\sigma)\cdot A^1(X_\sigma)\ \ \subset\ A^{11}(X_\sigma)\ .\]  
  To this end, we note that $A^1(X_\sigma)$ is $1$-dimensional, generated by the restriction $g$ of the Pl\"ucker line bundle $\LLL$. 
  Let $\iota\colon X_\sigma\to \operatorname{Gr}(3,V_{10})$ denote the inclusion. The normal bundle formula implies that
  \[  a\cdot g=  \iota^\ast \iota_\ast (a)\ \ \ \hbox{in}\ A^{11}(X_\sigma)\ \ \ \forall\ a\in\ A^{10}(X_\sigma)\ .\]
  It follows that
  \[ A^{10}(X_\sigma)\cdot A^1(X_\sigma)\ \ \subset\ \ima   \bigl( A^{11}(\operatorname{Gr}(3,V_{10}))_{}\xrightarrow{\iota^\ast} A^{11}(X_\sigma)\bigr)  \]
  also consists of universally defined cycles.
  
  In conclusion, we have shown that $R^{11}(X_\sigma)$ consists of universally defined cycles, and so theorem \ref{main2} is a corollary of theorem \ref{gfc}.
    \end{proof}

\begin{remark} There are more cycle classes that can be put in the subgroup $R^{11}(X)$ of theorem \ref{main2}. For instance, let $Y_\sigma$ be the hyperk\"ahler fourfold associated to $X=X_\sigma$, and assume $Y_\sigma$ is smooth. Then (as we have seen above) the class 
  \[ (p_\sigma)_\ast(q_\sigma)^\ast c_4(T_{Y_\sigma})\ \ \in\ A^{11}(X) \]
  is universally defined, hence it can be added to the subgroup $R^{11}(X)$ of theorem \ref{main2}. 
 \end{remark}

\begin{remark}\label{difficult} Theorem \ref{main2} is an indication that perhaps the hypersurfaces $X\subset \operatorname{Gr}(3,V_{10})$ have a {\em multiplicative Chow--K\"unneth decomposition\/}, in the sense of \cite[Chapter 8]{SV}. Unfortunately, establishing this seems difficult; one would need something like theorem \ref{gfc} for
  \[ A^{40}(\XX\times_B \XX\times_B \XX)\ .\]
\end{remark}

\vskip1cm
\begin{nonumberingt} Thanks to my mythical colleague Gilberto Kiwi for inspiring conversations, and thanks to a zealous referee for constructive comments.

\end{nonumberingt}

\vskip1cm

\end{document}